\title{\Large  Independent transversal domination number of a graph
\thanks{Research supported by NSFC (No. 11571294) }}
\author{ {Hongting Wang, Baoyindureng Wu \footnote{Corresponding author.
Email: baoywu@163.com (B. Wu) }, Xinhui An}\\
\small  College of Mathematics and System Sciences, Xinjiang
University \\ \small  Urumqi, Xinjiang 830046, P.R.China \\}
\date{}
\newtheorem{theorem}{Theorem}[section]
\newtheorem{conjecture}{Conjecture}[section]
\newtheorem{lemma}[theorem]{Lemma}
\newtheorem{corollary}[theorem]{Corollary}
\newtheorem{problem}[theorem]{Problem}
\begin{document}
\maketitle {\small \noindent{\bfseries Abstract}:
Let $G=(V, E)$ be a graph. A set $S\subseteq V(G)$ is a
{\it dominating set} of $G$ if every vertex in $V\setminus S$ is adjacent to a vertex of $S$. The
{\it domination number} of $G$, denoted by $\gamma(G)$, is the cardinality of a minimum dominating set of $G$.
Furthermore, a dominating set $S$ is an {\it independent transversal dominating set} of $G$ if it intersects every maximum independent
set of $G$. The {\it independent transversal domination number} of $G$, denoted by $\gamma_{it}(G)$, is
the cardinality of a minimum independent
transversal dominating set of $G$.
In 2012, Hamid initiated the study of the independent transversal domination of graphs, and posed the following two conjectures:

Conjecture 1. If $G$ is a non-complete connected graph on $n$ vertices, then $\gamma_{it}(G)\leq\lceil\frac{n}{2}\rceil$.

Conjecture 2. If G is a connected bipartite graph, then $\gamma_{it}(G)$ is either $\gamma(G)$ or $\gamma(G)+1$.

We show that Conjecture 1 is not true in general. Very recently, Conjecture 2 is partially verified to be true by Ahangar, Samodivkin, Yero.
Here, we prove the full statement of Conjecture 2.
In addition, we give a correct version of
a theorem of Hamid. Finally, we answer a problem posed by Mart\'{i}nez, Almira, and Yero
on the independent transversal total domination of a graph.

\noindent{\bfseries Keywords:} Domination number; Independence number; Covering number;
Independent transversal domination number

\section {\large Introduction}
We consider undirected finite simple graphs only, and follow the notations and terminology in \cite{Bondy}.
Let $G=(V(G), E(G))$ be a graph. The {\it order} of $G$ is $|V(G)|$. For a vertex $v\in V(G)$, the
set of neighbors is denoted by $N(v)$. The degree of $v$, denoted by $d(v)$, is number of edges incident with $v$ in $G$.
Since $G$ is simple, $d(v)=|N(v)|$. The minimum degree of $G$, denoted by $\delta(G)$, is $\min\{d(v):\ v\in V(G)\}$.
For a set $S\subseteq V(G)$, let $N(S)=\cup_{v\in S} N(v)$.
A set $S$ is said to be an {\it independent set} of $G$, if no pair of vertices of $S$ are adjacent in $G$.
The {\it independence number} of $G$, denoted by $\alpha(G)$, is the cardinality of a maximum independent set of $G$.
We denote by $\Omega(G)$ the set of all maximum independent sets of $G$.

On the other hand, $S$ is said to be a {\it vertex
covering} of $G$ if every edge of $G$ is incident with some vertex of $S$ in $G$. It is easy to see that $S$ is an independent set of $G$ if and
only if $V(G)\setminus S$ is a vertex covering of $G$. The vertex covering number, denoted by $\beta(G)$, is the cardinality of a minimum vertex covering of $G$.
So, for any graph $G$ of order $n$,  \begin{equation}\alpha(G)+\beta(G)=n. \end{equation}

A set $M\subseteq E(G)$ is said to be a matching of $G$ if no pair of edges of $M$ have a common end vertex. The matching number of
$G$, denoted by $\alpha'(G)$, is the cardinality of a maximum matching of $G$. It is obvious that for a graph $G$ of order $n$, \begin{equation} \alpha'(G)\leq \min\{\frac n 2,\ \beta(G)\}.\end{equation}
The well-known K\"{o}nig-Egerv\'{a}ry theorem states that
for a bipartite graph $G$, \begin{equation}\alpha'(G)=\beta(G).\end{equation}

A set $S\subseteq V(G)$ is a
{\it dominating set} of $G$ if every vertex in $V\setminus S$ is adjacent to a vertex of $S$. The
{\it domination number} of $G$, denoted by $\gamma(G)$, is the minimum cardinality of a dominating set of $G$.
A minimum dominating set of a graph $G$ is called a $\gamma(G)$-set of $G$.

It is clear that for a graph $G$,  \begin{equation}\gamma(G)\leq \alpha(G), \end{equation} and if $G$ has no isolated vertices,
\begin{equation}\gamma(G)\leq \beta(G). \end{equation}

A set $S \subseteq V(G)$ is said to be an {\it independent transversal} of $G$ if $S\cap I\neq \emptyset$ for any $I\in \Omega(G)$.
The {\it independent transversal number} of $G$, denoted by $\tau_i(G)$, is the cardinality of a minimum independent transversal of
$G$.
A dominating set $S$ of a graph $G$ is said to be an {\it independent transversal dominating set}
if $S\cap I\neq \emptyset$ for any $I\in \Omega(G)$. By the definitions above, for any graph $G$,
\begin{equation}\max\{\gamma(G), \tau_i(G)\}\leq \gamma_{it}(G). \end{equation}

%

The notion of independent
transversal domination was first introduced by Hamid \cite{H} in 2012.
He proved that for a graph $G$ of order $n$, $\gamma_{it}(G)\leq n$, with equality
if and only if $G\cong K_n$.

\begin{theorem}(Hamid \cite{H})
For a graph $G$ without isolated vertices, $\gamma_{it}(G)\leq \beta(G)+1$.
\end{theorem}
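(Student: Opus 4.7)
The plan is to start from a minimum vertex cover $C$ of $G$ with $|C|=\beta(G)$, show it is already a dominating set, and then either use $C$ itself as an independent transversal dominating set or augment it by a single well-chosen vertex.

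First, I would verify that $C$ dominates $G$. Every edge has an endpoint in $C$ by the definition of vertex cover, so any $v\notin C$ has all of its neighbors inside $C$. Because $G$ has no isolated vertices, such a $v$ has at least one neighbor, and that neighbor lies in $C$; hence $C$ is a dominating set. This is where the hypothesis on isolated vertices is genuinely used.

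Next, I would examine whether $C$ meets every maximum independent set. By equation~(1) in the excerpt, $|V\setminus C|=n-\beta(G)=\alpha(G)$, so $V\setminus C$ is itself a maximum independent set. If $I\in\Omega(G)$ satisfies $I\cap C=\emptyset$, then $I\subseteq V\setminus C$ and $|I|=\alpha(G)=|V\setminus C|$ force $I=V\setminus C$. Consequently, $V\setminus C$ is the \emph{only} maximum independent set that can possibly avoid $C$.

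I would now split into two cases. If every $I\in\Omega(G)$ meets $C$, then $C$ is itself an independent transversal dominating set and $\gamma_{it}(G)\le\beta(G)\le\beta(G)+1$. Otherwise, the unique $I\in\Omega(G)$ disjoint from $C$ is $I_{0}=V\setminus C$; since $G$ has no isolated vertices, $I_{0}\ne\emptyset$, so I can pick any $v\in I_{0}$ and set $S=C\cup\{v\}$. Adding a vertex preserves domination, and by the previous paragraph $S$ meets $I_{0}$ as well as every other maximum independent set (each of which already meets $C$). Hence $\gamma_{it}(G)\le|S|=\beta(G)+1$, which finishes the argument. There is no real obstacle here; the only subtlety is recognizing that at most one maximum independent set can be missed by $C$, which makes the $+1$ slack always sufficient.
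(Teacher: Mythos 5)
Your proof is correct and is essentially the standard argument for Hamid's theorem (the paper only cites this result from \cite{H} without reproducing a proof): a minimum vertex cover $C$ dominates when there are no isolated vertices, its complement $V\setminus C$ is the unique maximum independent set that could avoid $C$, and adding one vertex of $V\setminus C$ repairs this. The only cosmetic quibble is that $V\setminus C\neq\emptyset$ follows simply from $\alpha(G)=n-\beta(G)\geq 1$ for any graph with at least one vertex, rather than from the absence of isolated vertices.
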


\begin{theorem}  (Hamid \cite{H})
If $G$ is a non-complete connected graph of order $n$ with $\alpha(G)\geq \frac n 2$, then $\gamma_{it}(G)\leq \frac n 2$.
\end{theorem}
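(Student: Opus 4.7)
The plan is to combine Hamid's Theorem~1.1 with a single-vertex swap argument in the tight case. By the Gallai identity $\alpha(G)+\beta(G)=n$ stated in~(1), the hypothesis $\alpha(G)\geq n/2$ gives $\beta(G)\leq n/2$. Since $G$ is connected and non-complete it has no isolated vertex, so Theorem~1.1 yields
\[
\gamma_{it}(G)\leq \beta(G)+1\leq \tfrac{n}{2}+1.
\]
When $\beta(G)\leq n/2-1$ this already forces $\gamma_{it}(G)\leq n/2$, so the real content is to save one in the tight case $\alpha(G)=\beta(G)=n/2$ (which in particular requires $n$ to be even).

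In the tight case, fix a minimum vertex cover $C$ with $|C|=n/2$ and set $I:=V(G)\setminus C$, so $I\in\Omega(G)$ and $|I|=n/2$. Since $G$ has no isolated vertex, $C$ is a dominating set of size $n/2$, and if $I'\in\Omega(G)$ satisfies $I'\cap C=\emptyset$ then $I'\subseteq I$ and so $I'=I$ by maximality; hence $C$ already meets every maximum independent set except $I$ itself. The plan is to produce $u\in C$ and $w\in I$ so that $D:=(C\setminus\{u\})\cup\{w\}$ is dominating and $D\cap I'\neq\emptyset$ for every $I'\in\Omega(G)$; since $|D|=|C|=n/2$, this will finish the proof.

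To locate such a $u$, introduce the private $I$-neighbor sets
\[
PN(u):=\{v\in I : N(v)\cap C=\{u\}\}\qquad (u\in C).
\]
Counting incidences yields $\sum_{u\in C}|PN(u)|\leq |I|=|C|$, so some $u_0\in C$ has $|PN(u_0)|\leq 1$. For this $u_0$, the requirement that every $v\in I\setminus\{w\}$ retain a neighbor in $C\setminus\{u_0\}$ is exactly $PN(u_0)\subseteq\{w\}$, while the domination of $u_0$ itself demands $w\in N(u_0)$ or $N(u_0)\cap C\neq\emptyset$. The remaining transversal condition admits a clean reformulation: an $I'\in\Omega(G)$ with $I'\cap C=\{u_0\}$ must have the form $I'=(I\setminus\{v'\})\cup\{u_0\}$ for the unique neighbor $v'$ of $u_0$ in $I$, so the condition reduces to ``$w\neq v'$ whenever $u_0$ has exactly one $I$-neighbor $v'$''. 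A short case analysis on $|PN(u_0)|$ and $|N(u_0)\cap I|$ then pinpoints a valid $w$.

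The main obstacle is the degenerate configuration where $N(u_0)\cap C=\emptyset$, $N(u_0)\cap I=\{v'\}$ and $v'\in PN(u_0)$: here domination forces $w=v'$ while transversality forbids it. I would deal with this by choosing a different $u\in C$ among those with $|PN(u)|\leq 1$, or, failing that, by replacing $C$ with another minimum vertex cover, exploiting the fact that the degenerate configuration imposes a pendant-like local structure on $G$ (an edge $u_0v'$ whose removal reduces the problem) that provides the necessary flexibility.
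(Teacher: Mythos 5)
The first half of your argument is fine: $\alpha(G)\geq n/2$ gives $\beta(G)\leq n/2$ via $\alpha(G)+\beta(G)=n$, and Theorem~1.1 settles everything except the tight case $\alpha(G)=\beta(G)=n/2$. (Two side remarks: the paper only quotes this theorem from Hamid and contains no proof of it, so you are on your own here; and for odd $n$ the case $\beta(G)=(n-1)/2$ only yields $\gamma_{it}(G)\leq(n+1)/2$, which is not $\leq n/2$ --- indeed $P_3$ has $\alpha=2\geq 3/2$ and $\gamma_{it}=2>3/2$, so the bound must really be read as $\lceil n/2\rceil$ or with $n$ even.) The genuine gap is in the tight case. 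Your claim that a short case analysis ``pinpoints a valid $w$'' once $|PN(u_0)|\leq 1$, with the only obstruction being the configuration $N(u_0)\cap C=\emptyset$, $N(u_0)\cap I=\{v'\}$, $v'\in PN(u_0)$, is wrong on both counts. That configuration forces $d(u_0)=d(v')=1$ with $u_0v'\in E(G)$, i.e.\ a $K_2$ component, which cannot occur in a connected graph on $n\geq 3$ vertices; so the obstacle you single out is vacuous. The obstruction you miss is the clash between the domination requirement $PN(u_0)\subseteq\{w\}$ and the transversal requirement $w\neq v'$, which occurs whenever $PN(u_0)=N(u_0)\cap I=\{v'\}$, even if $N(u_0)\cap C\neq\emptyset$. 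Concretely, in $P_4=abcd$ with $C=\{b,c\}$ and $I=\{a,d\}$, both vertices of $C$ have $|PN|=1$ and both exhibit this clash, so no single swap $(C\setminus\{u\})\cup\{w\}$ yields an independent transversal dominating set; more generally, for any corona $H\circ K_1$ the cover $C=V(H)$ fails for every choice of $u_0$.

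This means your fallback --- ``choose a different $u$, or failing that, replace $C$ by another minimum vertex cover'' --- is not a cleanup step but the place where all the difficulty of the theorem lives, and you leave it as an intention rather than an argument. It does happen to work in the examples above (for $P_4$, the cover $\{a,c\}$ with $u_0=c$, $w=d$ produces the set $\{a,d\}$, which is dominating and meets all three maximum independent sets), but you give no reason why a minimum vertex cover admitting a good swap must always exist, and the corona examples show that changing the cover is essential, not optional. As written, the proof is incomplete precisely in the only case that is not an immediate consequence of Theorem~1.1.
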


In view of the above theorem, Hamid posed the following conjecture in \cite{H}.

\begin{conjecture} (Hamid \cite{H})
If $G$ is a non-complete connected graph on $n$ vertices, then $\gamma_{it}(G)\leq\lceil\frac{n}{2}\rceil$.
\end{conjecture}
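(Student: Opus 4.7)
Given the abstract's announcement that Conjecture~1 fails in general, my task is really to construct a counterexample. Combining Theorem~1.2 (Hamid) with the identity $\alpha(G)+\beta(G)=n$, one sees that the bound $\gamma_{it}(G)\le\lceil n/2\rceil$ already holds whenever $\alpha(G)\ge n/2$, so any counterexample must satisfy $\alpha(G)<n/2$. The cleanest place to hunt is the opposite extreme $\alpha(G)=2$, where the independent transversals become easy to describe in terms of the complement.

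When $\alpha(G)=2$, the maximum independent sets of $G$ are precisely the non-edges of $G$, i.e.\ the edges of $\overline G$. An independent transversal of $G$ is then exactly a vertex cover of $\overline G$, so $\gamma_{it}(G)\ge \beta(\overline G)$. To beat the conjectural bound it therefore suffices to find a triangle-free graph on $n$ vertices (which is what $\alpha(G)=2$ translates to in $\overline G$) with $\alpha(\overline G)<\lfloor n/2\rfloor$, since then $\beta(\overline G)=n-\alpha(\overline G)>\lceil n/2\rceil$.

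The Petersen graph $P$ fits: it is triangle-free on $n=10$ vertices with $\alpha(P)=4$, giving $\beta(P)=6$. I would therefore set $G:=\overline P$ and verify in order: (i) $G$ is $6$-regular and hence connected and non-complete; (ii) $\alpha(G)=\omega(P)=2$, so the maximum independent sets of $G$ are precisely the $15$ edges of $P$; (iii) every independent transversal of $G$ is a vertex cover of $P$, which gives $\gamma_{it}(G)\ge \beta(P)=6$; (iv) any subset $S\subseteq V(G)$ with $|S|\ge 4$ automatically dominates $G$, because a vertex $v$ left undominated would force $S\subseteq N_P(v)$, a set of size $3$. Combining (iii) and (iv) yields $\gamma_{it}(G)=6>5=\lceil n/2\rceil$, contradicting the conjecture.

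The only step I expect to need any real thought is (iv), and even there the $3$-regularity of $P$ makes the argument a single sentence. The construction is not an accident: any triangle-free graph with independence number below $\lfloor n/2\rfloor$—for example suitable Kneser graphs $K(2k+1,k)$ or Mycielski iterates—should yield further counterexamples, which strongly suggests that the correct universal upper bound for $\gamma_{it}(G)$ in terms of $n$ alone is closer to $\beta(G)+1$ (Theorem~1.1) than to $\lceil n/2\rceil$.
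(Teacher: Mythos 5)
Your disproof is correct, and its engine is exactly the paper's Lemma~2.1: when $\alpha(G)=2$ (i.e.\ $G$ is the complement of a triangle-free graph), the maximum independent sets of $G$ are the edges of $\overline G$, so every independent transversal is a vertex cover of $\overline G$ and $\gamma_{it}(G)\ge\tau_i(G)=\beta(\overline G)=n-\alpha(\overline G)$. Where you differ is in how that lemma is instantiated: the paper invokes Kim's Ramsey-theoretic result to get, for all sufficiently large $n$, triangle-free graphs with $\alpha\le 9\sqrt{n\log n}$, so that $\gamma_{it}(\overline G)\ge n-9\sqrt{n\log n}$ -- showing the conjecture fails badly in an asymptotic sense -- and only remarks in passing that the complement of the Petersen graph is ``also a counterexample.'' You supply exactly that omitted verification: with $P$ the Petersen graph, $\alpha(P)=4$ gives $\beta(P)=6>5=\lceil 10/2\rceil$, and your step (iv) (any $4$-set dominates the $6$-regular complement, since an undominated vertex would force $S\subseteq N_P(v)$, $|N_P(v)|=3$) even pins down $\gamma_{it}(\overline P)=6$ exactly; note that (iv) is not needed for the disproof itself, since $\gamma_{it}\ge\tau_i$ already gives $\gamma_{it}(\overline P)\ge 6$. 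So your route is more elementary and explicit, at the price of giving a single small counterexample rather than the paper's family exhibiting $\gamma_{it}=n-o(n)$. One small caveat in your closing remark: Kneser graphs $K(2k+1,k)$ do work (their independence ratio is $k/(2k+1)<1/2$), but Mycielski iterates do not, since the Mycielskian of an $n$-vertex graph has $2n+1$ vertices and contains the independent set of the $n$ shadow vertices, so its independence number is at least $\lfloor N/2\rfloor$, violating your requirement $\alpha(\overline G)<\lfloor N/2\rfloor$.
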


We show that the above conjecture is not true in general.
Hamid \cite{H} showed that $\gamma(G)\leq \gamma_{it}(G)\leq \gamma(G)+\delta(G)$. In particular, $\gamma_{it}(T)$
is either $\gamma(T)$ or $\gamma(T)+1$  for a tree $T$. So, he proposed the following conjecture.

\begin{conjecture} (Hamid \cite{H})
If G is a connected bipartite graph, then $\gamma_{it}(G)$ is either $\gamma(G)$ or $\gamma(G)+1$.
\end{conjecture}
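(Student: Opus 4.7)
The plan is to show $\gamma_{it}(G)\leq\gamma(G)+1$, as the reverse inequality $\gamma(G)\leq\gamma_{it}(G)$ is immediate from (6). Let $D$ be a minimum dominating set of $G$ and set $\mathcal{M}_D=\{I\in\Omega(G):D\cap I=\emptyset\}$. If $\mathcal{M}_D=\emptyset$ then $D$ is already an independent transversal dominating set. Otherwise, it suffices to find a single vertex $v$ lying in every $I\in\mathcal{M}_D$, because then $D\cup\{v\}$ is an independent transversal dominating set of size $\gamma(G)+1$. The heart of the proof is therefore to show that $\bigcap_{I\in\mathcal{M}_D}I\neq\emptyset$.

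The main tool is the lattice structure of the maximum independent sets of a bipartite graph. Writing the bipartition of $G$ as $(A,B)$, I would first check that for any $I_1,I_2\in\Omega(G)$ the two sets
\[
(A\cap(I_1\cup I_2))\cup(B\cap(I_1\cap I_2))\quad\text{and}\quad (A\cap(I_1\cap I_2))\cup(B\cap(I_1\cup I_2))
\]
again belong to $\Omega(G)$: each is independent (any edge inside it would have both endpoints in $I_1$ or both in $I_2$), and their sizes sum to $|I_1|+|I_2|=2\alpha(G)$, forcing each to equal $\alpha(G)$. Iterating this pairwise operation over the members of $\mathcal{M}_D$ produces two maximum independent sets $I^{*},I^{**}$ satisfying
\[
I^{*}\cup I^{**}=\bigcup_{I\in\mathcal{M}_D}I\quad\text{and}\quad I^{*}\cap I^{**}=\bigcap_{I\in\mathcal{M}_D}I;
\]
both of them remain disjoint from $D$, because every intermediate set lies inside $\bigcup_{I\in\mathcal{M}_D}I\subseteq V\setminus D$.

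To finish I would apply inclusion-exclusion together with the trivial bound $|\bigcup_{I\in\mathcal{M}_D}I|\leq n-\gamma(G)$ (which follows from $\bigcup_{I\in\mathcal{M}_D}I\subseteq V\setminus D$) to get
\[
\Big|\bigcap_{I\in\mathcal{M}_D}I\Big|=|I^{*}|+|I^{**}|-\Big|\bigcup_{I\in\mathcal{M}_D}I\Big|\geq 2\alpha(G)-n+\gamma(G).
\]
Since the larger side of the bipartition is itself an independent set, $\alpha(G)\geq n/2$, so the right-hand side is at least $\gamma(G)\geq 1$; any vertex $v$ in $\bigcap_{I\in\mathcal{M}_D}I$ then gives the desired independent transversal dominating set $D\cup\{v\}$.

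The main obstacle is the lattice identity for pairs of maximum independent sets and the verification that its iteration preserves membership in $\mathcal{M}_D$. Once that structural input is in place, the conclusion collapses to one inclusion-exclusion estimate and the elementary inequality $\alpha(G)\geq n/2$ for bipartite graphs.
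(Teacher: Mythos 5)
Your proof is correct, and it takes a genuinely different route from the paper's. The paper proves $\gamma_{it}(G)\leq\gamma(G)+1$ via the Boros--Golumbic--Levit theorem: if $\alpha(G)>\alpha'(G)$ the core $\bigcap_{I\in\Omega(G)}I$ is nonempty and a core vertex is added to a $\gamma(G)$-set; otherwise the K\"onig--Egerv\'ary relations force $\alpha(G)=\alpha'(G)=\beta(G)=\frac n2$ with $|X|=|Y|$, and the paper splits again, using Hamid's bound $\gamma_{it}(G)\leq\beta(G)+1$ when $\gamma(G)=\beta(G)$ and otherwise deleting a vertex $x\in S\cap X$ and applying Boros--Golumbic--Levit to $G-x$ to find a vertex in its core. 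Your argument replaces all of this by the lattice structure of maximum independent sets of a bipartite graph: your pairwise operation is sound (any edge inside either derived set would lie inside $I_1$ or inside $I_2$, and the size count $|J_1|+|J_2|=2\alpha(G)$ forces both to be maximum), its iteration over $\mathcal{M}_D$ does produce $I^{*}=(A\cap\bigcup I)\cup(B\cap\bigcap I)$ and $I^{**}=(A\cap\bigcap I)\cup(B\cap\bigcup I)$ with the stated union and intersection, and the final estimate $|\bigcap_{I\in\mathcal{M}_D}I|\geq 2\alpha(G)-(n-\gamma(G))\geq\gamma(G)\geq 1$ is valid since every member of $\mathcal{M}_D$ avoids $D$ and $\alpha(G)\geq\frac n2$ for bipartite graphs; the disjointness of $I^{*},I^{**}$ from $D$ that you note is true but not even needed. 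What your approach buys: it is self-contained (no appeal to Boros--Golumbic--Levit or to $\gamma_{it}\leq\beta(G)+1$), it avoids any case analysis, it never uses connectivity and so proves the statement for all bipartite graphs, and it gives the stronger quantitative fact that all maximum independent sets missing a fixed dominating set share at least $2\alpha(G)-n+\gamma(G)$ vertices; in particular it sidesteps the delicate step in the paper where a theorem stated for connected graphs is applied to $G-x$, which need not be connected. What the paper's route buys is brevity given the cited machinery and a core-based viewpoint that it reuses in later sections.
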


Recently, Ahangar, Samodivkin, Yero \cite{Ah} proved that Conjecture 2 is valid for all unbalanced bipartite graphs.

\begin{theorem} (Ahangar, Samodivkin, Yero \cite{Ah})
Let $G$ be a bipartite graph with bipartition $(X, Y)$ such that $|X|\neq |Y|$. Then, $\gamma_{it}(G)\leq \gamma(G)+1$. In
particular, this is true when $G$ has odd order.

\end{theorem}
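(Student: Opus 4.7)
The plan is to exhibit a single vertex $y^* \in Y$ that lies in every maximum independent set of $G$; then, for any minimum dominating set $D$ of $G$, the set $D \cup \{y^*\}$ will automatically be an independent transversal dominating set of size at most $\gamma(G) + 1$. Without loss of generality I assume $|X| < |Y|$; the ``in particular'' clause follows because $|V(G)|$ odd forces $|X| \neq |Y|$.

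To produce $y^*$, fix a maximum matching $M$ of $G$. Since each edge of $M$ uses exactly one vertex of $X$, we have $|M| \leq |X| < |Y|$, so there is a vertex $y^* \in Y$ not saturated by $M$. I claim $y^*$ belongs to every $I \in \Omega(G)$. Suppose, for contradiction, that $y^* \notin I$ for some $I \in \Omega(G)$, so $y^* \in C := V(G) \setminus I$; then $C$ is a minimum vertex cover of $G$, and by (1) and (3) we have $|C| = \beta(G) = \alpha'(G) = |M|$. A short double-counting forces every vertex of $C$ to be saturated by $M$: because $C$ covers every edge, $\sum_{e \in M} |e \cap C| \geq |M|$, while this same sum equals the number of $M$-saturated vertices of $C$ and is therefore at most $|C| = |M|$. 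Equality throughout means each vertex of $C$ is matched by $M$, contradicting the choice of $y^*$.

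Given $y^*$, the theorem is immediate: let $D$ be a $\gamma(G)$-set; then $D \cup \{y^*\}$ is a dominating set (as supersets of dominating sets are dominating), has size at most $\gamma(G) + 1$, and intersects every $I \in \Omega(G)$ via $y^*$. Hence $\gamma_{it}(G) \leq \gamma(G) + 1$. I do not foresee any substantial obstacle; the entire content of the proof lies in the matching-theoretic observation that, in a bipartite graph, every vertex of a minimum vertex cover must be saturated by any fixed maximum matching, which together with the imbalance $|X| < |Y|$ pins down a universal element of $\bigcap \Omega(G)$.
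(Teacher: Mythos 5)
Your proof is correct. Note that the paper itself does not prove this statement --- it is quoted from Ahangar, Samodivkin and Yero --- but the case is subsumed by the first paragraph of the paper's proof of Theorem 2.6: when $|X|\neq|Y|$ one has $\alpha(G)\geq\max\{|X|,|Y|\}>\min\{|X|,|Y|\}\geq\alpha'(G)$, so the Boros--Golumbic--Levit theorem (Theorem 2.4) gives $\xi(G)\geq\alpha(G)-\alpha'(G)+1>0$, and a vertex of $core(G)$ is adjoined to a $\gamma(G)$-set. Your argument has exactly the same skeleton (exhibit a vertex lying in every maximum independent set and add it to a minimum dominating set), but you replace the appeal to Theorem 2.4 by an elementary, self-contained justification: by K\"onig--Egerv\'ary every minimum vertex cover of a bipartite graph is saturated by any fixed maximum matching (your double count $\sum_{e\in M}|e\cap C|$ is correct), so any $M$-unsaturated vertex of the larger side lies in $\bigcap\Omega(G)$, and such a vertex exists because $|M|\leq|X|<|Y|$. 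What this buys is independence from the Boros--Golumbic--Levit result, at the cost of being specific to K\"onig--Egerv\'ary graphs, whereas Theorem 2.4 applies to arbitrary connected graphs with $\alpha>\alpha'$ and is what the paper needs elsewhere. The ``in particular'' clause and the edge cases (empty matching, $X=\emptyset$) are all handled correctly by your argument.
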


We show that the full statement of Conjecture 2 in the next section.

Complexity of independent transversal domination problem can be in \cite{Ah}.

\section {\large Disproof of Conjecture 1.1 and Proof of Conjecture 1.2}
First we begin with a useful observation.
\begin{lemma}
Let $G$ be non-complete graph of order $n$. If $G$ is the complement of a triangle-free graph, then $\tau_i(G)=\beta(\overline{G})$, and thus
$\gamma_{it}(G)\geq n-\alpha(\overline{G})$.
\end{lemma}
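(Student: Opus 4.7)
\medskip

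\noindent\textbf{Proof proposal.} The plan is to identify $\Omega(G)$ explicitly using the triangle-freeness of $\overline{G}$, and then translate the notion of independent transversal of $G$ into a familiar covering concept in $\overline{G}$.

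First I would observe the well-known correspondence between independent sets in $G$ and cliques in $\overline{G}$: a set $I\subseteq V(G)$ is an independent set in $G$ if and only if $I$ induces a clique in $\overline{G}$. Because $G$ is non-complete, $\overline{G}$ has at least one edge, so $\omega(\overline{G})\ge 2$; because $\overline{G}$ is triangle-free, $\omega(\overline{G})\le 2$. Hence $\alpha(G)=\omega(\overline{G})=2$ and, crucially,
\[
\Omega(G)=\bigl\{\{u,v\}:\, uv\in E(\overline{G})\bigr\}.
\]

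The main step is then just a matter of unwrapping the definitions. A set $S\subseteq V(G)$ is an independent transversal of $G$ iff $S\cap\{u,v\}\neq\emptyset$ for every edge $uv\in E(\overline{G})$, which is exactly the statement that $S$ is a vertex cover of $\overline{G}$. Minimising $|S|$ on both sides yields $\tau_i(G)=\beta(\overline{G})$. Combining this with inequality (6) of the introduction gives $\gamma_{it}(G)\ge \tau_i(G)=\beta(\overline{G})$, and then an application of (1) to $\overline{G}$ rewrites $\beta(\overline{G})$ as $n-\alpha(\overline{G})$, completing the claimed bound.

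There is no real obstacle here; the only thing one must be careful about is the hypothesis that $G$ is non-complete, which is precisely what guarantees $\omega(\overline{G})=2$ (rather than $1$) so that the maximum independent sets of $G$ really are the edges of $\overline{G}$ and not isolated vertices. Once that pitfall is avoided, the proof reduces to two lines.
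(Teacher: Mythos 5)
Your proof is correct and follows essentially the same route as the paper: identify $\Omega(G)$ with $E(\overline{G})$ via $\alpha(G)=2$, observe that an independent transversal of $G$ is exactly a vertex cover of $\overline{G}$, and finish with inequality (6) and the Gallai identity (1). Your explicit remark on why non-completeness is needed to rule out $\omega(\overline{G})=1$ is a small but welcome clarification the paper leaves implicit.
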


\begin{proof}
Since $G$ is the complement of a triangle-free graph, $\alpha(G)=2$.
Thus, $\Omega(G)=\{\{u, v\}:\ uv\in E(\overline{G})\}$ and
an independent transversal of $G$ is a vertex covering of $\overline{G}$, $\tau_i(G)=\beta(\overline{G})$, and thus
$\gamma_{it}(G)\geq n-\alpha(\overline{G})$.
\end{proof}


To disprove Conjecture 1.1, we recall a celebrated result on the Ramsey theory, due to Kim \cite{Kim}.

\begin{theorem} (Kim \cite{Kim}) For sufficiently large $n$, there exists a triangle-free graph
$G$ of order $n$ with $\alpha(G)\leq 9\sqrt{nlog\ n}$.
\end{theorem}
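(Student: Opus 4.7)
This is a landmark theorem of Ramsey theory, equivalent (up to absolute constants) to the upper bound $R(3,k)=O(k^{2}/\log k)$. The paper treats it purely as a black box, and I would prove it via the now-standard \emph{triangle-free random process}, originally due to R\"odl and refined to the form needed here by Bohman. Start with the empty graph on $[n]$, and at each step add a uniformly chosen ``open'' pair, i.e.\ a non-edge whose insertion would not complete a triangle. When no open pair remains, the process terminates with a maximal triangle-free graph $G$; the goal is to bound $\alpha(G)$ from above.

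The analysis uses Wormald's differential equation method. Rescale time by $t=m\,n^{-3/2}$, where $m$ is the number of edges added so far, and track two families of random variables: the global count $Q(t)$ of open pairs, and, for each subset $U\subseteq[n]$ of size $k:=\lceil 9\sqrt{n\log n}\,\rceil$, the count $Y_{U}(t)$ of open pairs contained in $U$. Heuristic differential equations predict $Q(t)/n^{2}\approx e^{-4t^{2}}$ and, as long as $U$ remains independent in the evolving graph, $Y_{U}(t)/|U|^{2}\approx e^{-4t^{2}}$ as well. This makes the process last until $t\approx \tfrac{1}{2}\sqrt{\log n}$ and forces the probability that no pair inside $U$ is ever added to be far smaller than $\binom{n}{k}^{-1}$. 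A union bound over the $\binom{n}{k}$ choices of $U$ then shows that with positive probability no such $U$ is independent in $G$, giving $\alpha(G)<k$ and producing a graph of the required form.

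The main obstacle is the concentration step: one must show rigorously that $Q(t)$ and all the $Y_{U}(t)$ stay within $o(1)$ of their deterministic trajectories \emph{uniformly} throughout the process. This is a delicate martingale argument on one-step differences, typically via Freedman's inequality, combined with a bootstrapping scheme that maintains the required conditional estimates for as long as the process is run. Once the concentration is in place, the union bound is routine and the absolute constant $9$ (or any sufficiently large constant) falls out of the computation. An alternative, older route is Kim's original semi-random construction; the triangle-free process is now preferred because the same framework also yields the matching constant in $R(3,k)$.
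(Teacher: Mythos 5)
The paper does not prove this statement at all: it is quoted verbatim from Kim's 1995 paper and used strictly as a black box (the authors even only need it to manufacture counterexamples via complementation in Corollary 2.3). So there is no in-paper argument to measure you against; the relevant comparison is with the literature you are re-deriving.

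Your outline names the right modern route (the triangle-free process analyzed by the differential equation method, as in Bohman and later Bohman--Keevash and Fiz Pontiveros--Griffiths--Morris, which in fact gives the much stronger $\alpha(G)\le(\sqrt{2}+o(1))\sqrt{n\log n}$), and the union-bound skeleton --- track $Q(t)$ and $Y_U(t)$, show both follow $e^{-4t^2}$, run to $t\approx\tfrac12\sqrt{\log n}$, beat $\binom{n}{k}$ --- is structurally correct. But as a proof it has a genuine gap, and you identify it yourself: the entire content of the theorem lives in the ``concentration step'' that you describe but do not carry out. Showing that $Q$ and, simultaneously, exponentially many $Y_U$'s stay on their trajectories for $\Theta(n^{3/2}\sqrt{\log n})$ steps requires the full self-correcting martingale/bootstrapping machinery (one must also control several auxiliary degree- and codegree-type variables to even write down the one-step expected changes of $Q$ and $Y_U$), and this occupies dozens of pages in the cited works. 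Writing ``typically via Freedman's inequality, combined with a bootstrapping scheme'' is a pointer to a proof, not a proof. Two smaller points: the triangle-free process is usually credited to Bollob\'as--Erd\H{o}s with the first analysis by Erd\H{o}s--Suen--Winkler (R\"odl's name attaches to the semi-random ``nibble,'' which is closer to Kim's original argument, i.e., to the proof the citation actually refers to); and the specific constant $9$ in the statement would still need to be extracted from the computation rather than asserted, although any absolute constant suffices for the paper's application.
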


Note that the symbol $``\leq''$ in the inequality above was misprinted as $``\geq''$ in \cite{Kim}. The following
result is an immediate consequence of Lemma 2.1 and Theorem 2.2.
\begin{corollary}
For sufficiently large $n$, there exists a non-complete graph $G$ (the complement of a triangle-free graph) of order $n$,
$\gamma_{it}(G)\geq n-\alpha(\overline{G})\geq n-9\sqrt{nlog\ n}>\lceil \frac n 2\rceil$.
\end{corollary}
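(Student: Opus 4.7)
The plan is to assemble Corollary 2.3 directly from Lemma 2.1 and Theorem 2.2, with only a short asymptotic verification at the end. The corollary is an existence statement, so nothing needs to be constructed explicitly: Kim's theorem supplies the witness, and Lemma 2.1 converts a bound on the independence number of a triangle-free graph into a lower bound on $\gamma_{it}$ of its complement.

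First I would apply Theorem 2.2 to obtain, for each sufficiently large $n$, a triangle-free graph $H$ on $n$ vertices with $\alpha(H)\le 9\sqrt{n\log n}$. Then I would set $G=\overline{H}$ and check that $G$ is non-complete: indeed, if $H$ had no edges then $\alpha(H)=n$, which contradicts $\alpha(H)\le 9\sqrt{n\log n}$ as soon as $9\sqrt{n\log n}<n$, so $H$ has at least one edge and $G$ is non-complete. By construction $G$ is the complement of a triangle-free graph, so Lemma 2.1 applies and gives
\[
\gamma_{it}(G)\;\ge\; n-\alpha(\overline{G})\;=\;n-\alpha(H)\;\ge\;n-9\sqrt{n\log n}.
\]

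It then remains to verify the asymptotic inequality $n-9\sqrt{n\log n}>\lceil n/2\rceil$ for all sufficiently large $n$. Rearranging, this is equivalent to $\lfloor n/2\rfloor>9\sqrt{n\log n}$ (up to an additive constant coming from the ceiling), i.e.\ to $\sqrt{n}>18\sqrt{\log n}+O(1/\sqrt n)$, which certainly holds once $n$ is large enough since $\sqrt{n}/\sqrt{\log n}\to\infty$. I would simply note this without grinding out a threshold, since the statement only claims validity for sufficiently large $n$.

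There is no real obstacle here; the mathematical content is carried entirely by Kim's theorem and by Lemma 2.1, both of which are already stated. The only points that require any care are (i) arguing that $G$ is non-complete, which follows from the bound on $\alpha(H)$ itself, and (ii) the routine asymptotic comparison $n-9\sqrt{n\log n}>\lceil n/2\rceil$, both of which are straightforward.
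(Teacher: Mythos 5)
Your proposal is correct and follows exactly the route the paper intends: the paper calls the corollary ``an immediate consequence of Lemma 2.1 and Theorem 2.2,'' and you supply precisely that derivation, including the two details worth checking (that $G=\overline{H}$ is non-complete and that $n-9\sqrt{n\log n}>\lceil n/2\rceil$ for large $n$). Nothing further is needed.
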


This disproves Conjecture 1. It is straightforward to check that
the complement of the Petersen graph $P_{10}$ is also a counterexample to Conjecture 1. Alon \cite{Al} gave
some explicit construction of triangle-free graphs with relatively small independence numbers contrast to
their orders. The complements of these graphs are also counterexamples to Conjecture 1.

\vspace{2mm} Let $core(G)=\cap\{S:S\in\Omega(G)\}$ be the set of vertices belonging to all maximum independent sets,
and let $\xi(G)=|core(G)|$.
\begin{theorem}  (Boros, Golumbic, and Levit \cite{BGL})  If $G$ is a connected graph with $\alpha(G)>\alpha'(G)$, then $\xi(G)\geq\alpha(G)-\alpha'(G)+1$.

\end{theorem}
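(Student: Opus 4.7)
My plan is to prove this by combining a Hall-type exchange lemma with a matching-augmentation argument, in three stages.

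\textbf{Stage 1 (symmetric-difference lemma).} First I would show that for any two maximum independent sets $S_1, S_2 \in \Omega(G)$, the bipartite subgraph $G[S_1 \triangle S_2]$ admits a perfect matching between its parts $S_1\setminus S_2$ and $S_2\setminus S_1$ (which have equal size). Otherwise, by Hall's theorem some $A'\subseteq S_1\setminus S_2$ would satisfy $|N_G(A')\cap(S_2\setminus S_1)|<|A'|$, and then $(S_2\setminus N_G(A'))\cup A'$ would be independent in $G$: any edge from $u\in A'\subseteq S_1$ to $v\in S_2\setminus N_G(A')$ would force $v\in N_G(A')\cap S_2\subseteq N_G(A')\cap(S_2\setminus S_1)$ (as $A'$ is independent rules out $v\in S_1\cap S_2$), a contradiction. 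Its size would exceed $\alpha(G)$, violating maximality. As a corollary, $|S_1\setminus S_2|\le\alpha'(G)$ for any two maximum independent sets.

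\textbf{Stage 2 (a max independent set avoiding $R$).} Fix $S_0\in\Omega(G)$ and let $R=S_0\setminus\mathrm{core}(G)$. The central claim is the existence of $S^*\in\Omega(G)$ with $S^*\cap S_0=\mathrm{core}(G)$, equivalently $S^*\cap R=\emptyset$. Granting this, Stage~1 applied to $(S_0,S^*)$ yields a perfect matching $M$ in $G[S_0\triangle S^*]=G[R\cup(S^*\setminus S_0)]$ between $R$ and $S^*\setminus S_0$, which is a matching in $G$ of size $|R|=\alpha(G)-\xi(G)$ whose partners all lie in the maximum independent set $S^*$. To establish the claim, I would choose $S^*\in\Omega(G)$ containing $\mathrm{core}(G)$ and minimizing $|S^*\cap R|$; supposing this minimum is positive, I would pick $v\in S^*\cap R$ together with some $S_v\in\Omega(G)$ avoiding $v$, and then trace an alternating path starting at $v$ in the perfect matching of $G[S^*\triangle S_v]$ (given by Stage~1). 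This alternating-path rotation should produce an updated $S^*{}'\in\Omega(G)$ containing $\mathrm{core}(G)$ with $|S^*{}'\cap R|<|S^*\cap R|$, contradicting minimality.

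\textbf{Stage 3 (augmentation through a core vertex).} Assume for contradiction that $\xi(G)\le\alpha(G)-\alpha'(G)$. Then $|R|\ge\alpha'(G)$, so the matching $M$ of Stage~2 has size $\alpha'(G)$ and is a maximum matching of $G$. Since $\alpha(G)>\alpha'(G)$ together with the bound $\xi(G)\ge\alpha(G)-\alpha'(G)$ just derived guarantees $\mathrm{core}(G)\ne\emptyset$, pick $z\in\mathrm{core}(G)$ and, using connectedness of $G$, a neighbor $u$ of $z$. Then $u\notin S_0$ (because $z\in S_0$ is independent), and $u\notin S^*\setminus S_0$ (because $u\in N_G(z)$ and $z\in\mathrm{core}(G)\subseteq S^*$ would otherwise violate independence of $S^*$), so $u$ is not saturated by $M$. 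Consequently $M\cup\{zu\}$ is a matching in $G$ of size $|R|+1>\alpha'(G)$, contradicting the maximality of $\alpha'(G)$. Therefore $\xi(G)\ge\alpha(G)-\alpha'(G)+1$.

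The hard part is the alternating-path rotation in Stage~2: I must ensure the swap removing $v$ does not simultaneously reintroduce other vertices of $R$ into $S^*{}'$. I would attempt to use a shortest alternating path from $v$ to a vertex of $S_v\setminus S^*$ lying outside $R$, since shortest-path extremality should force the intermediate vertices to be drawn from $\mathrm{core}(G)$ and from non-$R$ vertices only. Verifying this net-decrease property carefully, via the structural difference between core vertices (forced into every member of $\Omega(G)$) and $R$-vertices (each admitting an excluding max independent set), is where I expect the delicate combinatorial work to concentrate.
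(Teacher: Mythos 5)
This statement is quoted in the paper from Boros, Golumbic and Levit and is not proved there, so there is no in-paper argument to compare against; I can only assess your proposal on its own terms. Your Stage 1 is a correct and standard exchange lemma, and your Stage 3 would indeed finish the proof \emph{if} Stage 2 were available. Unfortunately, the central claim of Stage 2 is false, even under all the hypotheses of the theorem, so the architecture collapses rather than merely leaving a technical verification to be done.

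Here is a counterexample to Stage 2. Let $G$ have vertices $c,x,y,a_1,a_2,b_1,b_2$ and edges $cx$, $cy$, $ca_1$, $ca_2$, $a_1b_1$, $a_2b_2$, $b_1b_2$. This graph is connected. No maximum independent set contains $c$ (such a set would live in $\{c,b_1,b_2\}$), so every maximum independent set is $\{x,y\}$ together with a maximum independent set of the path $a_1b_1b_2a_2$; hence $\Omega(G)=\bigl\{\{x,y,a_1,a_2\},\ \{x,y,a_1,b_2\},\ \{x,y,b_1,a_2\}\bigr\}$, $\alpha(G)=4$, $\mathrm{core}(G)=\{x,y\}$, $\xi(G)=2$. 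Since $x$ and $y$ are both pendant at $c$, any matching saturates at most one of them, so $\alpha'(G)=3$ and the hypothesis $\alpha(G)>\alpha'(G)$ holds (and the theorem is tight: $\xi=2=\alpha-\alpha'+1$). Now take $S_0=\{x,y,a_1,a_2\}$, so $R=\{a_1,a_2\}$. Every member of $\Omega(G)$ contains $a_1$ or $a_2$, so there is no $S^*\in\Omega(G)$ with $S^*\cap S_0=\mathrm{core}(G)$. This also explains why your sketched proof of Stage 2 cannot be repaired: the minimization-plus-alternating-path argument you outline never uses the hypothesis $\alpha(G)>\alpha'(G)$, yet the claim already fails without that hypothesis for $P_4$ with $S_0$ its two endpoints, and, as the example above shows, it fails with the hypothesis as well. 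The difficulty you flagged (the rotation reintroducing vertices of $R$) is not a delicate detail to be checked but an unavoidable obstruction. A correct proof has to proceed differently — for instance by working with a fixed maximum matching and the vertices it misses, or with critical independent sets as in the original Boros--Golumbic--Levit argument — rather than by finding one maximum independent set disjoint from all of $R$.
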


It can be deduced from the theorem above that $\tau_i(G)=1$ for any connected graph $G$ with $\alpha(G)>\alpha'(G)$. It is an
interesting problem for characterizing graphs with $\tau_i(G)=1$.

\begin{problem}
What is the best upper bound of $\tau_i(G)$ for graphs $G$ in terms of their order $n$ ?
\end{problem}

\vspace{2mm} Now, we are ready to show Conjecture 2.

\begin{theorem} If G is a connected bipartite graph, then $\gamma_{it}(G)$ is either $\gamma(G)$ or $\gamma(G)+1$.
\end{theorem}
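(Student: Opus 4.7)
The lower bound $\gamma(G)\le\gamma_{it}(G)$ is immediate from inequality $(6)$, so my task is to prove the matching upper bound $\gamma_{it}(G)\le\gamma(G)+1$. I would start from any $\gamma(G)$-set $D$. If $D$ already meets every maximum independent set of $G$, then $D$ itself is an independent transversal dominating set and $\gamma_{it}(G)=\gamma(G)$; so the nontrivial case is when there exists $I_0\in\Omega(G)$ disjoint from $D$. In that case the plan is to exhibit a single vertex $v\in V(G)\setminus D$ lying in every maximum independent set of $G$ missed by $D$; then $D\cup\{v\}$ is dominating, hits every element of $\Omega(G)$, and has size $\gamma(G)+1$.

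The vertex $v$ will be extracted from the core of an auxiliary graph. Set $H=G[V(G)\setminus D]$, which is bipartite as an induced subgraph of $G$. The existence of $I_0\subseteq V(H)$ forces $\alpha(H)\ge|I_0|=\alpha(G)$, and the reverse inequality is trivial, so $\alpha(H)=\alpha(G)$. K\"onig's theorem $(3)$ applied to $H$ then yields $\alpha'(H)=\beta(H)=|V(H)|-\alpha(H)=n-\gamma(G)-\alpha(G)$, so
\[\alpha(H)-\alpha'(H)=2\alpha(G)-n+\gamma(G).\]
Bipartiteness of $G$ forces $\alpha(G)\ge n/2$ (the larger colour class is an independent set), and $\gamma(G)\ge 1$ is trivial, so $\alpha(H)-\alpha'(H)\ge\gamma(G)\ge 1$. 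Consequently some connected component $H^*$ of $H$ must satisfy $\alpha(H^*)>\alpha'(H^*)$, and Theorem~$2.4$ (Boros, Golumbic, and Levit) applied to $H^*$ supplies a vertex $v\in core(H^*)\subseteq core(H)$, using the elementary fact that for a disjoint union of graphs the core decomposes as the disjoint union of the cores of the components. Because $\alpha(H)=\alpha(G)$, every maximum independent set of $G$ that avoids $D$ is itself a maximum independent set of $H$ and therefore contains $v$, finishing the construction.

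The main technical point is the combined inequality $2\alpha(G)-n+\gamma(G)\ge 1$, which is what licences the Boros-Golumbic-Levit core bound on $H$. Bipartiteness supplies $\alpha(G)\ge n/2$, giving $2\alpha(G)-n\ge 0$, while non-triviality of $G$ supplies $\gamma(G)\ge 1$; neither alone suffices, but together they do. The unbalanced bipartite case, already handled by Ahangar, Samodivkin and Yero, corresponds to $2\alpha(G)-n\ge 1$, where the same argument even works with $H$ replaced by $G$ itself; the balanced case $\alpha(G)=n/2$ is precisely where passing from $G$ to the subgraph $H$ becomes essential, and this is the genuine novelty over \cite{Ah}.
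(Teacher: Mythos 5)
Your proof is correct, and it takes a genuinely different (and arguably cleaner) route than the paper's. Both arguments ultimately produce the extra vertex from the Boros--Golumbic--Levit core theorem (Theorem 2.4), but they apply it to different graphs. The paper splits into cases: if $\alpha(G)>\alpha'(G)$ it applies Theorem 2.4 to $G$ itself; otherwise it derives $\alpha(G)=\alpha'(G)=\beta(G)=\frac n2$, disposes of the subcase $\gamma(G)=\beta(G)$ via Theorem 1.1, and in the remaining subcase deletes a single vertex $x\in S\cap X$ of the dominating set and applies Theorem 2.4 to $G-x$, using that every maximum independent set either contains $x$ or survives in $G-x$. You instead delete the entire dominating set $D$ and apply Theorem 2.4 to a component of $H=G[V(G)\setminus D]$; the single computation $\alpha(H)-\alpha'(H)=2\alpha(G)+\gamma(G)-n\ge\gamma(G)\ge 1$ replaces the paper's whole case analysis and uses neither Theorem 1.1, nor the explicit bipartition $(X,Y)$, nor even the connectivity of $G$ (so your argument in fact covers disconnected bipartite graphs as well). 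You are also more careful than the paper on one point: Theorem 2.4 is stated for connected graphs, and you explicitly pass to a component $H^*$ with $\alpha(H^*)>\alpha'(H^*)$ and invoke the componentwise decomposition of the core, whereas the paper applies Theorem 2.4 directly to $G-x$, which need not be connected. The only microscopic caveat is that when $H^*$ is a single vertex the numerical bound of Theorem 2.4 would read $\xi(K_1)\ge 2$, which is false as literally stated; but all you need from it is $core(H^*)\ne\emptyset$, which is immediate in that case, so nothing is lost.
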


\begin{proof} Let $S$ be a $\gamma(G)$-set. If $ \alpha(G)>\alpha'(G)$, then by Theorem 2.4, $\xi(G)\geq\alpha(G)-\alpha'(G)+1>0$.
So, $S\cup \{v\}$ is an independent transversal dominating set for a vertex $v\in core(G)$, and hence $\gamma_{it}(G)\leq |S\cup \{v\}|=\gamma(G)+1$.

Next, we consider the case $\alpha(G)\leq \alpha'(G)$. Since $G$ is bipartite, \begin{equation}\alpha(G)\geq \frac n 2\geq \alpha'(G).\end{equation}
Combining (7) with (1), (2), (3), we have \begin{equation} \alpha(G)=\alpha'(G)=\beta(G)=\frac{n}{2}. \end{equation}

If $\gamma(G)=\beta(G)$, then the result follows from Theorem 1.1. So, we assume that $\gamma(G)<\beta(G)$.
Let $(X, Y)$ be the bipartition of $G$. By the equation (8), we have $|X|=|Y|=\frac n 2$. Thus, $S\cap X\neq \emptyset$ and
$S\cap Y\neq \emptyset$. Take a vertex $x\in S\cap X$, and let $\Omega_x$ be the set of all maximum independent set of $G$ containing $x$.
In particular, $X\in \Omega_x$. We consider $G-x$.
It is clear that $\Omega\setminus \Omega_x=\Omega(G-x)$ and $Y\in \Omega\setminus \Omega_x$.  Note that $$\alpha(G-x)=|Y|=\frac n 2> \frac n 2 -1=|X\setminus \{x\}|\geq \alpha'(G-x).$$
By Theorem 2.4, $\xi(G-x)>0$, and let $y\in core(G-x)$. So, $S\cup \{y\}$ is an independent transversal dominating set of $G$. This shows $\gamma_{it}(G)\leq \gamma(G)+1$.

\end{proof}

%


\section{\large Bipartite graphs $G$ with $\gamma_{it}(G)=\frac n 2$}

Hamid \cite{H} obtained the following theorem.
\begin{theorem}(Hamid \cite{H})
For a bipartite graph $G$ with bipartition $(X,Y)$ such that $|X|\leq |Y|$ and $\gamma(G)=|X|$, $\gamma_{it}(G)=\gamma(G)+1$ if and only if every vertex in $X$ is adjacent to at least two pendant vertices.
\end{theorem}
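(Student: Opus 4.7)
The plan is to invoke Theorem 2.5, which pins $\gamma_{it}(G)$ to either $\gamma(G)$ or $\gamma(G)+1$ for any connected bipartite graph; so each direction reduces to either showing that no $\gamma$-set meets every maximum independent set, or producing a $\gamma$-set that does.

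For the sufficient direction, assuming every $x\in X$ has at least two pendant neighbors in $Y$, I will proceed in two steps. First, $X$ is the unique $\gamma$-set: if $x\in X$ has pendant neighbors $y_1,y_2\in Y$, then any dominating set $S$ with $x\notin S$ must contain both $y_1,y_2$ (whose only neighbor is $x$), but then $(S\setminus\{y_1,y_2\})\cup\{x\}$ is still dominating and strictly smaller, contradicting $|S|=\gamma(G)=|X|$; hence $X\subseteq S$ and therefore $S=X$. Second, $Y$ is a maximum independent set: for any independent set $I$, writing $I_X=I\cap X$ and $I_Y=I\cap Y$, the at least $2|I_X|$ pairwise distinct pendant neighbors of vertices in $I_X$ lie in $Y\setminus I_Y$, whence $|I|\le|I_X|+|Y|-2|I_X|\le|Y|$. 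The unique $\gamma$-set $X$ is then disjoint from the maximum independent set $Y$, so no $\gamma$-set is an independent transversal, and Theorem 2.5 yields $\gamma_{it}(G)=\gamma(G)+1$.

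For the necessary direction I will argue by contrapositive, assuming some $x_0\in X$ has at most one pendant neighbor and producing a $\gamma$-set $S$ that meets every maximum independent set. If $\alpha(G)>|Y|$ then every maximum independent set contains a vertex of $X$, so $X$ itself is such a transversal; hence I may assume $\alpha(G)=|Y|$. The natural candidate is $S=(X\setminus\{x_0\})\cup\{y_0\}$, where $y_0$ is chosen as the unique pendant neighbor of $x_0$ if one exists and otherwise as any neighbor of $x_0$ of degree at least two. The assumption on $x_0$ makes $S$ dominating: $y_0$ covers $x_0$, and any $y\in Y\setminus\{y_0\}$ lacking a neighbor in $X\setminus\{x_0\}$ would be a second pendant neighbor of $x_0$, which is excluded.

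The main obstacle is verifying that this $S$ meets every maximum independent set. A hypothetical $I$ disjoint from $S$ lies in $\{x_0\}\cup(Y\setminus\{y_0\})$, forcing $|I|\le 1+|Y|-d(x_0)$ when $x_0\in I$ and $|I|\le|Y|-1$ otherwise; both contradict $|I|=|Y|$ unless $d(x_0)\le 1$, so the delicate subcase is precisely when $x_0$ is itself pendant. Small instances such as $G=K_2$, or the tree on $\{x_1,x_2,y_1,y_2,y_3\}$ with edges $x_1y_1,x_1y_2,x_1y_3,x_2y_3$ (where $x_2$ is a pendant $X$-vertex with no pendant neighbor, yet $\gamma_{it}(G)=\gamma(G)+1$), suggest that the single-swap construction can fail outright. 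I therefore expect this edge case to be the crux of the converse, and anticipate that the ``correct version of a theorem of Hamid'' announced in the introduction is precisely the refinement needed to exclude such degeneracies.
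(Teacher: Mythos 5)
Your instinct here is exactly right, and it is worth saying plainly: the statement as quoted is \emph{false}, and the paper records it only in order to refute it. The ``only if'' direction fails precisely in the degenerate case you isolated, namely when some vertex of $X$ is itself pendant. Your five-vertex tree (with $X=\{x_1,x_2\}$, $Y=\{y_1,y_2,y_3\}$ and edges $x_1y_1,x_1y_2,x_1y_3,x_2y_3$) is a genuine counterexample: $\gamma(G)=2=|X|$, the maximum independent sets are $Y$ and $\{x_2,y_1,y_2\}$, no $2$-element dominating set meets both, so $\gamma_{it}(G)=3=\gamma(G)+1$, yet $x_2$ has no pendant neighbor at all. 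This is the same phenomenon as the paper's Figure~1, and the ``correct version'' you anticipated is exactly the paper's Theorem~3.2: the condition must be weakened to ``every vertex of $X$ is either a pendant vertex or is adjacent to at least two pendant vertices.'' So you should not regard the failure of your single-swap construction for pendant $x_0$ as a gap in your argument; it is a gap in the theorem.

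On the parts you did prove: your sufficiency argument (under the stronger hypothesis that every $x\in X$ has two pendant neighbors) is correct and, if anything, cleaner than what the paper does for the corrected statement --- you get uniqueness of the $\gamma$-set $X$ by the two-pendant swap and then exhibit $Y$ as a maximum independent set disjoint from it, whereas the paper describes $\Omega(G)$ and the family of all minimum dominating sets explicitly (which is necessary under the weaker corrected hypothesis, where the $\gamma$-set is no longer unique). Two small cautions. First, for the upper bound $\gamma_{it}(G)\le\gamma(G)+1$ you invoke the dichotomy theorem for \emph{connected} bipartite graphs, but the statement here does not assume connectivity; the paper instead uses $\gamma_{it}(G)\le\beta(G)+1\le|X|+1=\gamma(G)+1$ (its Theorem~1.1), which needs no connectivity. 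Second, in the necessity direction your reduction ``if $\alpha(G)>|Y|$ then $X$ is a transversal'' is harmless but never occurs: the hypotheses already force $\beta(G)\le|X|$ and hence $\alpha(G)\ge|Y|$, and the paper's necessity proof begins by pinning down $\alpha(G)=|Y|$ in exactly this way before running the swap argument you describe for non-pendant $u\in X$.
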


Actually, the theorem above is not complete, see for the counterexample in Figure 1. It is easy to see that
for the graph $G$, $\gamma(G)=2=|X|$ and $\gamma_{it}(G)=3$. However,
there is vertex in $X$, which has no two pendent neighbors.

\begin{center}
\scalebox{0.1}{\includegraphics{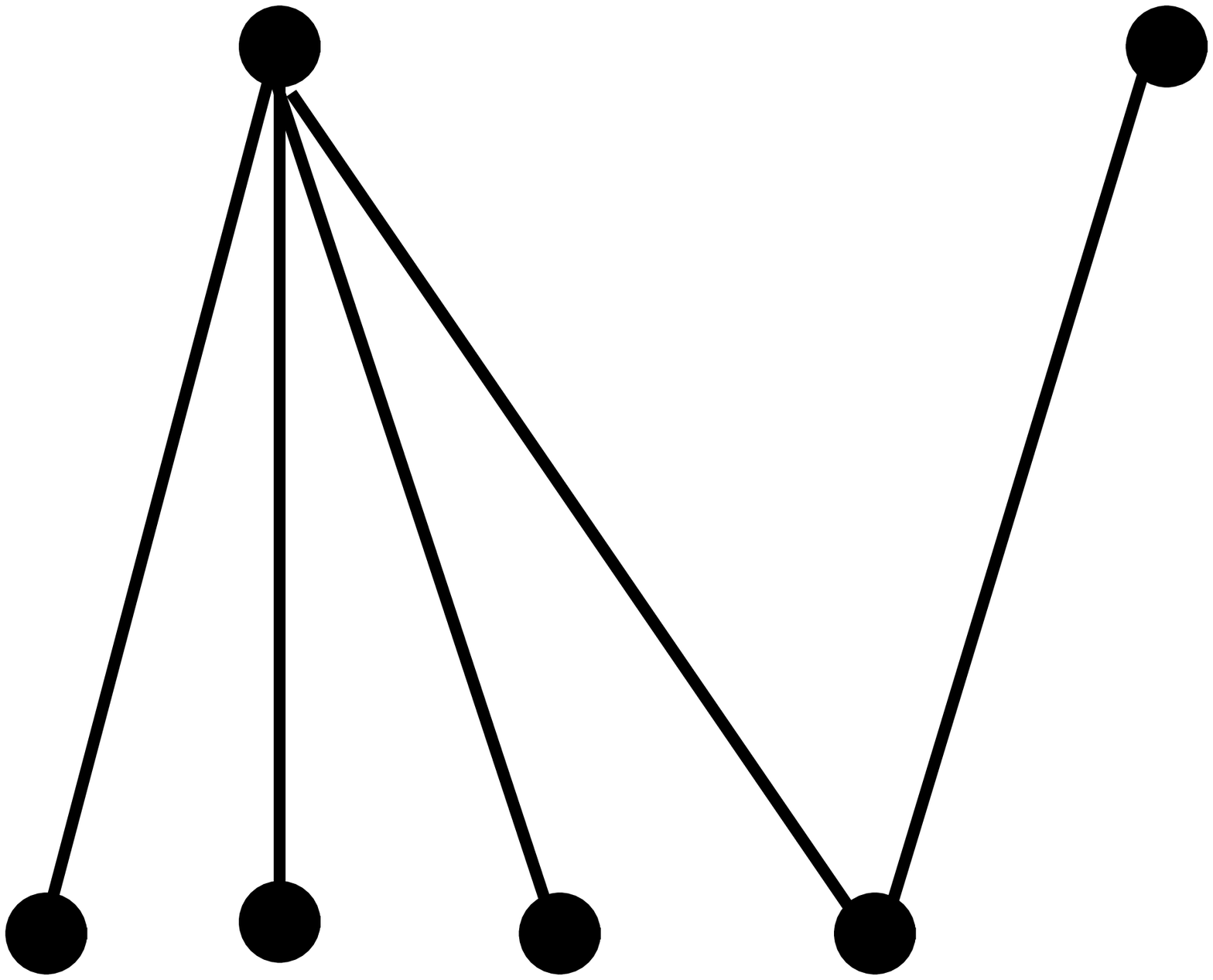}}\\
\vspace{0.4cm} Figure 1. A counterexample $G$
\end{center}

Next we present the correct version of the theorem.

\begin{theorem}  Let $G$ be a bipartite graph with bipartition $(X,Y)$ such that $|X|\leq |Y|$ and $\gamma(G)=|X|$, then $\gamma_{it}(G)=\gamma(G)+1$ if and only if every vertex in $X$ is either a pendent vertex, or is adjacent to at least two pendant vertices.
\end{theorem}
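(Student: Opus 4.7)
I plan to prove both directions by contrapositive. First note that (1)--(3) together with $\gamma(G)=|X|\le|Y|$ and $\gamma(G)\le\beta(G)=\alpha'(G)$ force $\beta(G)=|X|$ and hence $\alpha(G)=|Y|$, so $Y\in\Omega(G)$. By Theorem 2.5 we have $\gamma_{it}(G)\in\{\gamma(G),\gamma(G)+1\}$, so it suffices to characterize when $\gamma_{it}(G)=\gamma(G)$, i.e.\ when some minimum dominating set is also an independent transversal.

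For the forward direction ($\Rightarrow$), I argue the contrapositive: suppose some $x\in X$ is not a pendant and has at most one pendant neighbor. Let $y^{\ast}$ be that pendant neighbor if it exists, otherwise any vertex of $N(x)$. I will show that $D=(X\setminus\{x\})\cup\{y^{\ast}\}$ is a dominating set of size $|X|=\gamma(G)$ that intersects every maximum independent set. Domination is routine: by the choice of $y^{\ast}$, every $y\in N(x)\setminus\{y^{\ast}\}$ is non-pendant, so retains a neighbor in $X\setminus\{x\}$. For the transversal property, take any $I\in\Omega(G)$ with $|I|=|Y|$. If $I\cap(X\setminus\{x\})=\emptyset$, then $I\cap X\subseteq\{x\}$, and since $\deg(x)\ge 2$ the case $I\cap X=\{x\}$ forces $|I\cap Y|\le|Y|-2$, a contradiction; hence $I\subseteq Y$, forcing $I=Y\ni y^{\ast}\in D$. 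Otherwise $I$ already meets $X\setminus\{x\}\subseteq D$.

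For the reverse direction ($\Leftarrow$), write $X=X_1\cup X_2$ where $X_1$ is the set of pendants of $X$ (each $x\in X_1$ has a unique neighbor $y_x\in Y$) and $X_2=X\setminus X_1$, every vertex of which has a pendant neighbor set $P_x^Y\subseteq Y$ of size $\ge 2$. I will show that for \emph{every} minimum dominating set $D$ of $G$, the set $V\setminus D$ itself is a maximum independent set disjoint from $D$. A preliminary lemma, obtained from $\gamma(G)=|X|$ alone, asserts that distinct pendants in $X_1$ have distinct $y$-neighbors: otherwise $(X\setminus\{x_1,x_2\})\cup\{y\}$ would dominate $G$ with only $|X|-1$ vertices. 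The heart of the argument is the following counting bound. For each $x\in X_2\setminus D$, the pendant neighbors $P_x^Y$ must lie in $D$, contributing at least $2$ to $|D\cap Y|$; for each $x\in X_1\setminus D$ the vertex $y_x$ must lie in $D$. By the lemma and the observation that a pendant of $X_2$ has its unique neighbor in $X_2$ (while $y_x$ for $x\in X_1$ has neighbor in $X_1$), these contributions are pairwise disjoint, giving $|D\cap Y|\ge 2|X_2\setminus D|+|X_1\setminus D|$. Combining with $|D|=|X|=|X_1|+|X_2|$ forces $X_2\subseteq D$ and $D\cap Y=\{y_x:x\in X_1\setminus D\}$ with equality throughout.

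It follows that $V\setminus D=(X_1\setminus D)\cup(Y\setminus\{y_x:x\in X_1\setminus D\})$ has cardinality exactly $|Y|=\alpha(G)$, and the only potential edges lie between its two parts; but each $x\in X_1\setminus D$ is a pendant whose unique neighbor $y_x$ has been removed from the $Y$-part, so $V\setminus D$ is independent. This gives a maximum independent set disjoint from $D$, so no minimum dominating set is an independent transversal, hence $\gamma_{it}(G)=\gamma(G)+1$. The main obstacle is the counting bound in the reverse direction, specifically verifying the disjointness of the forced contributions to $D\cap Y$ and establishing the distinct-neighbors lemma; everything else is bookkeeping.
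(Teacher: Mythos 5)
Your proposal is correct, and while the necessity direction follows essentially the same route as the paper, the sufficiency direction is a genuinely different and more complete argument. For necessity, both you and the paper exhibit, for a vertex $x\in X$ violating the condition, the swapped set $(X\setminus\{x\})\cup\{y^{\ast}\}$ and verify it is a minimum dominating set meeting every maximum independent set; the paper does this in two stages (first forcing $\delta(G)=1$, then choosing a neighbor of minimum degree), whereas you pick the pendant neighbor directly, which is a mild streamlining of the same idea. For sufficiency, the paper simply asserts, without justification, both the exact description of $\Omega(G)$ and the claim that every minimum dominating set has the form $(X\setminus X')\cup Y'$; your counting argument (the distinct-neighbors lemma forced by $\gamma(G)=|X|$, the disjointness of the forced contributions $P_x^Y$ and $y_x$ to $D\cap Y$, and the resulting equality $|D\cap Y|=|X_1\setminus D|$ with $X_2\subseteq D$) actually proves the structural claim the paper takes for granted, and then you only need the single witness $V\setminus D\in\Omega(G)$ rather than a description of all of $\Omega(G)$. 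This buys rigor at the cost of some bookkeeping; I checked the disjointness cases and the equality analysis and they go through. Two small points: you should invoke Theorem 1.1 (via $\beta(G)=|X|$, which you have already derived) rather than Theorem 2.5 for the bound $\gamma_{it}(G)\leq\gamma(G)+1$, since connectivity is not among the hypotheses of the statement; and your appeal to $\gamma(G)\leq\beta(G)$ tacitly assumes $G$ has no isolated vertices, but the paper's own proof makes exactly the same tacit assumption through its use of Theorem 1.1, so this is an issue with the statement rather than with your argument.
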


\begin{proof}

First, we show its necessity.
By the assumption that $|X|+1=\gamma(G)+1=\gamma_{it}(G)$ and by Theorem 1.1, we have $|X|\leq \beta(G)$. On the other
hand, since $X$ is a
vertex covering of $G$, $\beta(G)\leq |X|$. So, $X$ is a minimum covering of $G$, implying that $Y$ is
a maximum independent set of $G$.

Next, we show that $\delta(G)=1$. Suppose $\delta(G)\geq 2$. Since $G$ is bipartite and $\gamma(G)=|X|$,
$X$ is a $\gamma(G)$-set. Let $u\in X$ and $v\in N(u)$. Since $\delta(G)\geq 2$, it follows that $S=(X\setminus \{u\})\cup \{v\}$ is a
dominating set of $G$.

\vspace{2mm} \noindent{\bf Claim 1.} $S$ is an independent transversal dominating set of $G$.

Suppose it is not, and let $I$ be a maximum independent set of $G$ such that $I\cap S=\emptyset$.
Since $|I|=\alpha(G)=|Y|$ and $S=(X\setminus \{u\})\cup \{v\}$, we have
$I=(Y\setminus \{v\})\cup \{u\}$. However, since $d(u)\geq 2$, $u$ has a neighbor
in $I$. It contradicts the fact that $I$ is an independent set of $G$. This proves the
claim.

\vspace{2mm} By Claim 1, we have $\gamma_{it}(G)\leq |S|=|X|=\gamma(G)$, contradicting the assumption that $\gamma_{it}(G)=|X|+1$. This
proves $\delta(G)=1$.

\vspace{2mm} To complete the proof of the necessity, it suffices to show that if $u\in X$ is not a pendent vertex, then $u$ has at least two
pendent neighbors. Suppose, on the contrary, that $u$ has at most one pendent neighbor. Since $d(u)\geq 2$,
we choose a neighbor $v$ of $u$ with degree as small as possible.
Let $S=(X\setminus \{u\})\cup \{v\}$. We show that $S$ is a dominating set of $G$. If it is not,
there exists a vertex $y\in Y\setminus \{v\}$ not dominated by $S$. Then $y$ must be a pendent vertex and be adjacent to $u$.
Now, $u$ has two pendent neighbors $v$ and $y$, a contradiction. So, $S$ is a (minimum) dominating set of $G$.
Furthermore, by the similar argument as in the proof of Claim 1, we can show that $S$ is an independent transversal dominating set of $G$.
Again, we have $\gamma_{it}(G)\leq |S|=|X|=\gamma(G)$, contradicting the assumption that $\gamma_{it}(G)=\gamma(G)+1$. This
shows that $u$ has at least two
pendent neighbors.

%

\vspace{2mm} To show its sufficiency, assume that
every vertex in $X$ is either a pendent vertex, or is adjacent to at least two pendant vertices. Let $\{x_1, \ldots, x_k\}$ be the set of all pendent
vertices in $X$ for integer $k\geq 0$. Let $y_i$ be the neighbor of $x_i$ in $Y$ for each $i$. Then $\Omega(G)=\{I: I=(Y\setminus Y')\cup X'\}$,
where $X'\subseteq \{x_1, \ldots, x_k\}$ and $Y'=N(X')\cap \{y_1, \ldots, y_k\}$. In particular, if $X'=\emptyset$,
then $Y'=\emptyset$, and $I=Y$.

Note that if $S$ is a minimum dominating set of $G$, then there exists $X'\subseteq \{x_1, \ldots, x_k\}$ such
that $S=(X\setminus X')\cup Y'$, where $Y'$ is defined as above. However, $I_S=(Y\setminus Y')\cup X'$ is
a maximum independent set such that $S\cap I_S=\emptyset$.
It means that no minimum dominating set of $G$ is an independent transversal dominating set, implying that $\gamma_{it}(G)\geq \gamma(G)+1$.
On the other hand, by Theorem 2.1, $\gamma_{it}(G)\leq \beta(G)+1\leq |X|+1=\gamma(G)+1$. We conclude that $\gamma_{it}(G)=\gamma(G)+1$.

\vspace{2mm} The proof is completed.

\end{proof}

By Theorem 1.2, $\gamma_{it}(G)\leq \frac n 2$ for any bipartite graph $G$ without a component of order at most 2.
Hamid \cite{H} posed the following problem: characterizing all bipartite graphs $G$ for which $\gamma_{it}(G)=\frac{n}{2}$.
In what follows, we partially answer the problem.
If $G$ is a connected bipartite graph with $\gamma_{it}(G)=\frac{n}{2}$, then by Theorem 2.6,
\begin{equation}\frac n 2-1\leq \gamma(G)\leq \frac n 2.\end{equation}

The corona of a graph $G$, denoted by $G\circ K_1$, is the graph obtained from $G$ by joining a new vertex $v'$ to each vertex $v\in V(G)$.
In 1982, Payan and Xuong \cite{Pa} and independently, in 1985, Fink, Jacobson, Kinch and Roberts \cite{Fink} characterized the graphs $G$ of
order $n$ with $\gamma(G)=\frac n 2$.

\begin{theorem} (Fink et al. \cite{Fink}, Payan et al. \cite{Pa}) For any graph $G$ with even order $n$ having no isolated vertices
$\gamma(G)=\frac{n}{2}$ if and only if the components of $G$ are $C_{4}$ or $H\circ K_1$ for any connected graph $H$.
\end{theorem}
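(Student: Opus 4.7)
The plan is to prove both directions of the characterization. Sufficiency is a direct verification on each of the two families of components followed by additivity of $\gamma$ over components; for necessity, I would reduce to the connected case and induct on $n$, with the main obstacle being the case of minimum degree $\geq 2$.

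For sufficiency, $\gamma(C_4)=2=|V(C_4)|/2$ by inspection. For $H\circ K_1$ with $|V(H)|=m$, every pendant--support pair forces one of its two vertices into any dominating set, so $\gamma(H\circ K_1)\geq m$; conversely $V(H)$ is itself a dominating set of size $m$, giving $\gamma(H\circ K_1)=m=|V(H\circ K_1)|/2$. Since $\gamma$ is additive over connected components, $\gamma(G)=n/2$ whenever each component is of one of the two prescribed types.

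For necessity, I would first reduce to the connected case via Ore's bound $\gamma(G_i)\leq |V(G_i)|/2$ for each non-trivial component: summed componentwise, the hypothesis $\gamma(G)=n/2$ forces equality in every component. To characterize connected $G$ with $\gamma(G)=n/2$, I would induct on $n$. If $G$ has a pendant $v'$ with support $v$ (and $\deg(v)\geq 2$, else $G=K_2=K_1\circ K_1$), consider $G'=G-\{v,v'\}$. The inequality $\gamma(G)\leq \gamma(G')+1$ combined with Ore's bound on the non-trivial components of $G'$---handling any isolated vertices of $G'$ separately, since they must be additional pendants of $v$ in $G$ and thus force $v$ to be a support of several leaves---yields $\gamma(G')=(n-2)/2$. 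The inductive hypothesis then describes the components of $G'$ as $C_4$'s or coronas, and one checks that the only reattachment of $\{v,v'\}$ consistent with connectivity and $\gamma(G)=n/2$ is the one making $G$ itself a corona $H\circ K_1$; in particular, a $C_4$ component in $G'$ would attach to $v$ and create a vertex of degree $\geq 3$ inside that $C_4$, which one rules out by a short side computation on $\gamma$.

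The main obstacle is the case $\delta(G)\geq 2$, where I must show $G\cong C_4$. My plan is to fix a minimum dominating set $D$ of size $n/2$ with complement $\bar D$ also of size $n/2$. Minimality of $D$ forces each $u\in D$ either to have a private neighbor in $\bar D$ or to have $N(u)\subseteq \bar D$. Using $\delta(G)\geq 2$ together with a swap argument in the spirit of Claim~1 in the proof of Theorem 3.2---if $u\in D$ has a neighbor $w\in \bar D$, then the candidate set $(D\setminus\{u\})\cup\{w\}$ must fail to be a dominating set, forcing tight local constraints---I expect to produce a perfect matching between $D$ and $\bar D$ and then to locate, for each matched pair $uw$, a $4$-cycle $uww'u'$ with $u'\in D$, $w'\in \bar D$ and $u'w'$ the matched pair of $w'$. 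Connectivity and the size constraint $|V(G)|=n$ then collapse $G$ onto a single $C_4$. Making the swap argument airtight under the tight equality $|D|=|\bar D|=n/2$ is the delicate step.
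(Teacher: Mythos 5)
First, a point of context: the paper offers no proof of this statement at all --- it is quoted as a known classical result of Payan--Xuong and Fink--Jacobson--Kinch--Roberts, with a citation in place of an argument. So you are reconstructing a proof from scratch, and your sufficiency direction (the pendant--support pairs give $\gamma(H\circ K_1)\geq |V(H)|$, $V(H)$ gives the reverse, $\gamma(C_4)=2$, additivity over components) together with the componentwise reduction for necessity is correct and complete. The pendant-vertex induction is the standard route and is essentially sound; the reattachment analysis is genuinely fiddlier than ``one checks'' suggests (for instance, when $v$ attaches to a leaf of a $K_2$-component of $G'$ the graph $G$ is still a corona, but only after re-choosing which vertices play the role of leaves, as in $P_4$), but every bad attachment does produce a dominating set of size below $n/2$, so this is a matter of writing out cases rather than a missing idea.

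The genuine gap is the case $\delta(G)\geq 2$, and the specific mechanism you propose there does not work as stated. You want to derive ``tight local constraints'' from the requirement that $(D\setminus\{u\})\cup\{w\}$ fail to be a dominating set; but that set has the same cardinality as $D$, so nothing forces it to fail --- a one-for-one swap can never contradict the minimality of a dominating set of size $n/2$, only a two-for-one exchange can. (This is precisely where the analogy with Claim~1 of Theorem~3.2 breaks down: there the swapped set wins by being an independent transversal, not by being smaller.) To make your plan work you need different tools: first, the Bollob\'as--Cockayne lemma that a graph without isolated vertices has a minimum dominating set in which every vertex has an \emph{external} private neighbor; under $|D|=|\bar D|=n/2$ these private neighbors really do yield a perfect matching $u_iw_i$ with $N(w_i)\cap D=\{u_i\}$, and then $\delta\geq 2$ forces $N(u_i)\cap\bar D=\{w_i\}$ as well, so \emph{all} $D$--$\bar D$ edges are matching edges and both $G[D]$ and $G[\bar D]$ are isolate-free. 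From there you still must run two-for-one exchanges (delete two vertices of $D$, insert one of $\bar D$) and an Ore-type count on $G[D]\cup G[\bar D]$ to collapse everything to a single $C_4$; your sketch stops well short of this, and you say so yourself. As written, the $\delta\geq 2$ case is a plan with an acknowledged hole rather than a proof, and the hole is exactly at the step your argument leans on hardest.
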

\begin{corollary} Let $G$ be a connected graph of even order $n\geq 4$. If $\gamma(G)=\frac n 2$, then
$\gamma_{it}(G)=\frac n 2$.
\end{corollary}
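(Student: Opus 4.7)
The plan is to combine the structural characterization of graphs with $\gamma(G)=n/2$ (Theorem 3.3) with the upper bound of Hamid (Theorem 1.2), and use the trivial lower bound $\gamma_{it}(G)\geq \gamma(G)$ from inequality (6).

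First I would invoke Theorem 3.3 to note that, since $G$ is connected of even order $n\geq 4$ with $\gamma(G)=n/2$ and has no isolated vertex (being connected with $n\geq 2$), $G$ is either $C_4$ or of the form $H\circ K_1$ for some connected graph $H$. In either case, I would verify the two hypotheses needed for Theorem 1.2: non-completeness and $\alpha(G)\geq n/2$. For $C_4$, one has $\alpha(C_4)=2=n/2$ and clearly $C_4\neq K_4$. For $G=H\circ K_1$ with $|V(H)|=k=n/2\geq 2$, the $k$ newly attached pendant vertices form an independent set of size $n/2$, giving $\alpha(G)\geq n/2$; moreover every pendant vertex has degree $1$ while $n\geq 4$, so $G$ is non-complete.

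Having verified these hypotheses, Theorem 1.2 yields $\gamma_{it}(G)\leq n/2$. For the matching lower bound, I would simply apply inequality (6), which gives $\gamma_{it}(G)\geq \gamma(G)=n/2$. Combining the two inequalities yields $\gamma_{it}(G)=n/2$, as desired.

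There is no genuine obstacle in this argument: the real content is already packaged in Theorem 3.3 (classification) and Theorem 1.2 (the $n/2$ upper bound when $\alpha(G)\geq n/2$), so the proof reduces to checking that both hypotheses of Theorem 1.2 are automatic for the graphs produced by Theorem 3.3. The only point requiring a sentence of care is confirming $\alpha(G)\geq n/2$ and non-completeness for $H\circ K_1$, which is immediate from the pendant structure.
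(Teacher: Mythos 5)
Your proof is correct, but it takes a different route from the paper's for the main step. The paper also begins with Theorem 3.3 to reduce to $G\cong C_4$ or $G\cong H\circ K_1$, but then finishes by \emph{directly exhibiting} a minimum independent transversal dominating set in each case (for $H\circ K_1$, the set of pendant vertices, which meets every maximum independent set and dominates); you instead obtain the upper bound $\gamma_{it}(G)\leq \frac n2$ from Hamid's general Theorem 1.2 after checking its hypotheses, and pair it with the trivial lower bound (6). Your version buys a shorter verification at the cost of importing Theorem 1.2 as a black box, while the paper's version is self-contained modulo the classification. In fact, your argument can be streamlined further: you do not need Theorem 3.3 at all, since $\alpha(G)\geq\gamma(G)=\frac n2$ holds by inequality (4), and $G$ cannot be complete because $\gamma(K_n)=1<\frac n2$ for $n\geq 4$; thus the hypotheses of Theorem 1.2 follow directly from $\gamma(G)=\frac n2$, and the classification is only decoration in your approach.
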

\begin{proof}
Since $\gamma(G)=\frac n 2$, by Theorem 3.3, either $G\cong C_4$ or $H\circ K_1$, where $H$ is a connected graph of order $\frac n 2\geq 2$.
It is straightforward to check that $\gamma_{it}(C_4)=2$. If $G\cong H\circ K_1$ for a connected graph of order $\frac n 2$, $\gamma_{it}(G)=\frac n 2$, because
the set of pendent vertices is the unique minimum independent transversal dominating set of $G$. So, the result follows.
\end{proof}

\begin{theorem}
For a bipartite graph $G$ of even order $n$ with bipartition $(X, Y)$ and without a component of order at most 2, $\gamma_{it}(G)=\frac n 2$, if
one of the following cases occurs:

(1) $\gamma(G)=\frac n 2$.

(2) $\gamma(G)=\frac n 2-1$, and $|X|=\frac n 2-1$, $|Y|=\frac n 2+1$, and every vertex in $X$ is either a pendent vertex, or is adjacent to at least two pendant vertices.


\end{theorem}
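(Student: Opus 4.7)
The plan is to deduce both items as consequences of results already available in the paper, using the lower bound $\gamma_{it}(G)\geq \gamma(G)$ from inequality (6) and then producing an independent transversal dominating set of the asserted size.

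For part (1), since $\gamma(G)=n/2$ and $G$ has no component of order at most $2$, Theorem 3.3 forces every component $C$ of $G$ to be either $C_4$ or of the form $H\circ K_1$ for some connected graph $H$; bipartiteness of $G$ in turn makes each such $H$ bipartite. Corollary 3.4 then yields $\gamma_{it}(C)=|V(C)|/2=\gamma(C)$ for every component $C$. To build an independent transversal dominating set of $G$ of size $n/2$, I would fix one component $C_1$, take $S_1$ to be a minimum independent transversal dominating set of $C_1$, and take $S_i$ to be a minimum dominating set of $C_i$ for each remaining component. Since any maximum independent set of $G$ decomposes as a disjoint union of maximum independent sets of the components, the single piece $S_1$ already supplies the required transversal intersection, while the $S_i$'s handle domination in their respective components. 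Counting, $|S|=\tfrac12\sum_i|V(C_i)|=n/2$, which combined with $\gamma_{it}(G)\geq \gamma(G)=n/2$ yields equality.

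For part (2), the hypotheses have been arranged to match Theorem 3.2 exactly: $|X|=n/2-1\leq n/2+1=|Y|$, $\gamma(G)=n/2-1=|X|$, and every vertex of $X$ is either a pendant vertex or has at least two pendant neighbors. Theorem 3.2 then delivers $\gamma_{it}(G)=\gamma(G)+1=n/2$ with no further work.

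The only mildly nontrivial step is the componentwise assembly in (1). The key observation is that ``$S$ meets every $I\in \Omega(G)$'' is satisfied as soon as $S\cap V(C_j)$ meets every $I_j\in \Omega(C_j)$ for some single component $C_j$, because elements of $\Omega(G)$ are precisely unions of members of $\Omega(C_i)$ chosen independently across components. This OR-type condition, rather than an AND-type one, is exactly what permits the use of cheaper $\gamma(C_i)$-sets in all but one component while still landing at total size $n/2$; without it, a naive additivity argument would overshoot.
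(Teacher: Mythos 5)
Your proof is correct and follows essentially the same route as the paper: part (1) via Corollary 3.4 and part (2) by direct application of Theorem 3.2. In fact your componentwise assembly in part (1) is more careful than the paper's one-line citation of Corollary 3.4, since that corollary is stated only for connected graphs while the theorem permits disconnected ones; your observation that a set meeting every member of $\Omega(G)$ need only meet every maximum independent set of a single component (because maximum independent sets of $G$ are exactly unions of maximum independent sets of the components) is precisely the detail needed to close that gap.
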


\begin{proof}

If $\gamma(G)=\frac n 2$, by Corollary 3.4, $\gamma_{it}(G)=\frac n 2$.
If (2) holds, the result follows by Theorem 3.2.
\end{proof}

By a long and difficult proof, Hansberg and Vlolkman \cite{Han} were able to characterize even
order trees $T$ with $\gamma(T)=\frac n 2-1$.
By Theorems 3.2, 3.3, and 3.5, the bipartite graphs $G$ with $\gamma_{it}(G)=|X|=|Y|=\frac n 2$ and $\gamma(G)=\frac n 2-1$
remain to be characterized. Some more effort must be used for completing this task.

\section {\large Independent transversal total domination }
A new variant of transversal in graphs was introduced very recently by  Mart\'{i}nez, Almira, and Yero
\cite {Mar}, called independent transversal total domination.
A dominating set $S$ of a graph $G$ is called a {\it total dominating set} is $G[S]$ has no
isolated vertices. The {\it total dominating number} of $G$, denoted by $\gamma_t(G)$,
is the cardinality of a minimum total dominating set of $G$. Further, a total dominating
set $S$ is called an {\it independent transversal total dominating set} if $S\cap I\neq \emptyset$ for each
$I\in \Omega(G)$. The {\it independent transversal total dominating number} of $G$, denoted by $\gamma_{tt}(G)$,
is the cardinality of a minimum independent transversal total dominating set of $G$.
So, for any graph $G$, $\gamma_{tt}(G)\geq \gamma_{it}(G)$.
Cockayne, Dawes and Hedetneimi \cite{Coc} obtained the following upper on the total domination number of a
connected graph in terms of the order of the graph.

\begin{theorem} (Cockayne, Dawes and Hedetneimi \cite{Coc})
If $G$ is a connected graph of order $n\geq 3$, then $\gamma_t(G)\leq \frac {2n} 3$.
\end{theorem}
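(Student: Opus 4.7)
The plan is classical: reduce to trees and then induct on $n$. For the reduction, take any spanning tree $T$ of $G$; since $E(T)\subseteq E(G)$, every total dominating set of $T$ is a total dominating set of $G$, so $\gamma_t(G)\leq \gamma_t(T)$. It therefore suffices to prove the bound $\gamma_t(T)\leq \tfrac{2n}{3}$ for every tree $T$ of order $n\geq 3$.

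For trees I would induct on $n$. The base cases $n=3,4,5$ are settled by direct inspection: $\gamma_t(P_3)=\gamma_t(P_4)=2$, $\gamma_t(P_5)=3$, and $\gamma_t(K_{1,n-1})=2$, each comfortably at most $\tfrac{2n}{3}$. For the inductive step with $n\geq 6$, fix a longest path $v_0v_1\cdots v_k$ in $T$. Maximality forces $v_0$ to be a leaf, every neighbor of $v_1$ other than $v_2$ to be a leaf (write $L\ni v_0$ for this set), and every neighbor of $v_2$ other than $v_1,v_3$ to be either a leaf or a support vertex whose non-$v_2$ neighbors are all leaves. The core idea is to peel off a local chunk of at least three vertices containing $v_0$, add exactly two vertices to the growing TDS, and apply induction to the remainder.

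Concretely, the cleanest subcase is when $v_2$ has degree two in $T$: remove $L\cup\{v_1,v_2\}$ (a set of $|L|+2\geq 3$ vertices) to obtain a tree $T'$ on $n-|L|-2$ vertices, apply the inductive hypothesis to $T'$ to obtain a TDS $D'$ of size at most $\tfrac{2}{3}(n-|L|-2)$, and verify that $D:=D'\cup\{v_1,v_2\}$ is a TDS of $T$, since $v_1,v_2$ dominate each other, $v_1$ dominates every vertex of $L$, and $v_2$ dominates $v_3\in V(T')$. This yields $\gamma_t(T)\leq \tfrac{2(n-3)}{3}+2=\tfrac{2n}{3}$. When $v_2$ has additional pendant branches, a parallel surgery absorbs them into the same chunk so that the three-removed-for-two-added ratio is maintained.

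The main obstacle will be the case analysis near $v_2$ when it carries several pendant branches of mixed types (isolated leaves versus length-two branches), together with boundary degeneracies: one must verify that the removed chunk has size at least three per pair added to the TDS, that the remaining tree still has at least three vertices so the induction hypothesis applies, and that small exceptional configurations (such as $n=6$ double-star or broom shapes) are correctly folded into base cases or the inductive scheme. A subtler point is that the newly added pair $\{v_1,v_2\}$ must also totally dominate the interface vertex $v_3$ without undoing the domination already established in $D'$, which forces the specific choice $\{v_1,v_2\}$ rather than, for example, $\{v_0,v_1\}$; once this is arranged, the remaining book-keeping is routine.
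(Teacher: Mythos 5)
The paper does not actually prove this statement: Theorem 4.1 is quoted from Cockayne, Dawes and Hedetniemi \cite{Coc} purely as a known benchmark for the discussion of $\gamma_{tt}$, so there is no in-paper argument to compare yours against. Judged on its own, your proposal is the standard and correct route: the reduction to a spanning tree is immediate (extra edges can only help both domination and the no-isolated-vertex condition on $G[S]$), and the tree case goes by induction on a longest path. Your clean subcase is fine, and the ``main obstacle'' you flag does close: rooting the longest path at $v_k$, every child of $v_2$ other than $v_1$ is a leaf or a support vertex all of whose children are leaves; deleting the whole subtree rooted at $v_2$ removes at least $2s+1$ vertices, where $s\geq 1$ is the number of support-vertex children of $v_2$ (each contributes itself plus at least one leaf, and $v_2$ contributes itself), while you add only the $s+1$ vertices $\{v_2\}\cup\{\text{support children}\}$ to the total dominating set; since $2s+1\geq \tfrac{3}{2}(s+1)$ exactly when $s\geq 1$, the two-thirds ratio is preserved. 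The residual tree $T'$ hangs off $v_3$ and is connected; if $|V(T')|\leq 2$ you absorb $v_3$ (whose neighbor $v_2$ is already in the set) at a cost that still respects the bound. So the only work left is bookkeeping you have correctly identified, not a missing idea; if you want to cite a source with these details written out, they appear in the original paper \cite{Coc} and in standard treatments of total domination.
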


Brigham, Carrington, and Yitary \cite{Brig} characterized the connected graphs of order at
least 3 with total domination number exactly two-thirds their order. Among other things,
Mart\'{i}nez, Almira, and Yero showed that $\gamma_{tt}(G)\leq \frac {2n} 3$ for some
classes of graphs of order $n$. Based on these results, they further asked the following open problem:

\noindent $\bullet$ \ Is it true that $\gamma_{tt}(G)\leq \frac {2n} 3$ for any graph of order $n$ ? If yes,
then: Is it true that $\gamma_{tt}(G)=\frac {2n} 3$ if and only if $\gamma_t(G)=\frac {2n} 3$ ?

\vspace{2mm} The answer for the question above is no.
By Corollary 2.3, for sufficiently large $n$, there exists a graph $G$ (the complement of a
triangle-free graph) of order $n$ with $$\gamma_{tt}(G)\geq \gamma_{it}(G)\geq n-9\sqrt{nlog\ n}.$$
If $\gamma_{tt}(G)\leq \frac {2n} 3$ holds for any graph $G$ of order $n$, we have $n-9\sqrt{nlog\ n}\leq \frac {2n} 3 $,
which is impossible for any sufficiently large $n$.



\end{document}